\renewcommand{\bar}{\overline}
\renewcommand{\tilde}{\widetilde}
\newcommand{\mf}{\mathfrak}
\newcommand{\e}{\varepsilon}
\DeclareMathOperator{\Des}{Des}
\DeclareMathOperator{\des}{des}
\DeclareMathOperator{\Peak}{Peak}
\newcommand{\Z}{\mathbb{Z}}
\newcommand{\R}{\mathbb{R}}
\newcommand{\qnum}[1]{[#1]_q}
\newcommand{\qfac}[1]{[#1]!_q}
\newcommand{\qbinom}[2]{\genfrac{[}{]}{0pt}{}{#1}{#2}_q}
\newtheorem{theorem}{Theorem}
\newtheorem{prop}[theorem]{Proposition}
\newtheorem{conj}[theorem]{Conjecture}
\newtheorem{lemma}[theorem]{Lemma}
\newtheorem{cor}[theorem]{Corollary}
\theoremstyle{definition}
\newtheorem{ex}[theorem]{Example}
\theoremstyle{remark}
\newtheorem*{remark}{Remark}
\begin{document}

\title{On $q$-analogs of descent and peak polynomials}
\author{Christian Gaetz}
\author{Yibo Gao}
\thanks{C.G. was partially supported by an NSF Graduate Research Fellowship under grant No. 1122374}
\address{Department of Mathematics, Massachusetts Institute of Technology, Cambridge, MA.}
\email{\href{mailto:gaetz@mit.edu}{gaetz@mit.edu}} 
\email{\href{mailto:gaoyibo@mit.edu}{gaoyibo@mit.edu}}
\date{\today}

\begin{abstract}
\emph{Descent polynomials} and \emph{peak polynomials}, which enumerate permutations $\pi \in \mf{S}_n$ with given descent and peak sets respectively, have recently received considerable attention \cite{Billey-Burdzy-Sagan, descent-polynomials}.  We give several formulas for $q$-analogs of these polynomials which refine the enumeration by the length of $\pi$.  In the case of $q$-descent polynomials we prove that the coefficients in one basis are \emph{strongly $q$-log concave}, and conjecture this property in another basis.  For peaks, we prove that the $q$-peak polynomial is palindromic in $q$, resolving a conjecture of Diaz-Lopez, Harris, and Insko.
\end{abstract}

\maketitle

\section{Introduction} \label{sec:intro}

For $\pi=\pi_1 \ldots \pi_n$ a permutation in the symmetric group $\mf{S}_n$ written in one-line notation, the \emph{descent set} of $\pi$ is
\[
\Des(\pi)=\{i \in [n-1] \: | \: \pi_i > \pi_{i+1}\},  
\]
where $[n-1]$ denotes the set $\{1,\ldots,n-1\}$; we write $\des(\pi)$ for the number $|\Des(\pi)|$ of descents in $\pi$.  The \emph{length} of $\pi$ is the number of \emph{inversions}:
\[
\ell(\pi)=|\{(i,j) \: | \: 1\leq i < j \leq n, \: \pi_i>\pi_j\}|.
\]
Both statistics $\ell(\pi)$ and $\des(\pi)$ are of fundamental importance in the combinatorics of the symmetric group, as are their generalizations in other Coxeter groups.

The generating polynomial of the statistic $\des$ on $\mf{S}_n$ is known as the \emph{Eulerian polynomial}:
\[
A_n(t)=\sum_{\pi \in \mf{S}_n} t^{\des(\pi)}.
\]
These polynomials can be succinctly encoded in the generating function
\begin{equation} \label{eq:eulerian-gf}
   \sum_{n \geq 0} \frac{x^n}{n!} A_n(t) = \frac{(1-t)e^{x(1-t)}}{1-te^{x(1-t)}}. 
\end{equation}
Keeping track of the joint distribution of $\des$ and $\ell$ gives the following elegant $q$-analog of (\ref{eq:eulerian-gf}) due to Stanley \cite{Stanley-binomial-posets} (see Reiner \cite{Reiner-descents-and-length} for the generalization to Coxeter groups):
\begin{equation} \label{eq:q-eulerian-gf}
    \sum_{n \geq 0} \frac{x^n}{\qfac{n}} \sum_{\pi \in \mf{S}_n} t^{\des(\pi)}q^{\ell(\pi)} = \frac{(1-t)\exp(x(1-t);q)}{1-t\exp(x(1-t);q)}.
\end{equation}
Here $\qfac{n}=\qnum{1} \cdots \qnum{n}$ where $\qnum{k}=1+q+\cdots +q^{k-1}$ and $\exp(x;q)=\sum_{n \geq 0} x^n/\qfac{n}$.

Rather than considering the distribution of $\des$ on $\mf{S}_n$ for each $n$ as in (\ref{eq:eulerian-gf}), MacMahon \cite{MacMahon} showed that if one fixes a finite subset $S \subset \Z_{>0}$ then the function
\[
D_S(n)=|\{\pi \in \mf{S}_n \: | \: \Des(\pi)=S\}|
\]
is a polynomial in $n$.  These \emph{descent polynomials}, although defined in 1915, have received significant attention only recently, beginning with the work of Diaz-Lopez et al. \cite{descent-polynomials} and continuing with a flurry of work (see \cite{Bencs, Jiradilok-McConville, Oguz}) on several open problems raised there.

The \emph{peak set} of $\pi$ is
\begin{align*}
\Peak(\pi)&=\{i \in \{2,3,...,n-1\} \: | \: \pi_{i-1} < \pi_i > \pi_{i+1}\}\\ &=\{i \in \{2,3,...,n-1\} \: | \: i \in \Des(\pi), i-1 \not \in \Des(\pi)\}.
\end{align*}
Similarly to the case of descent sets, we let
\[
P_S(n)=|\{\pi \in \mf{S}_n \: | \: \Peak(\pi)=S\}|.
\]
Billey, Burdzy, and Sagan \cite{Billey-Burdzy-Sagan} proved the remarkable result that, after accounting for a power of two, this function is also polynomial:
\begin{equation}\label{eq:peak-is-power-times-poly}
P_S(n) = 2^{n-|S|-1} p_S(n),
\end{equation}
where $p_S(n)$ is an integer-valued polynomial called the \emph{peak polynomial}.

In light of the elegant $q$-analog (\ref{eq:q-eulerian-gf}) of (\ref{eq:eulerian-gf}), and motivated by a conjecture of Diaz-Lopez, Harris, and Insko (see Corollary \ref{cor:symmetry-conjecture}), this papers studies the natural $q$-analogs of the descent and peak polynomials
\[
    D_S(n,q) = \sum_{\substack{\pi \in \mf{S}_n \\ \Des(\pi)=S}} q^{\ell(\pi)}
\]
and 
\[
    P_S(n,q) = \sum_{\substack{\pi \in \mf{S}_n \\ \Peak(\pi)=S}} q^{\ell(\pi)}
\]
with the aim of understanding them uniformly in $n$ and $q$.  Note that specializing $q=1$ recovers the usual functions $D_S(n)$ and $P_S(n)$.

The remainder of Section \ref{sec:intro} covers background which will be needed later; none of this is new except for Proposition \ref{prop:closed-under-convolution}.  Section \ref{sec:descent-and-peak-background} covers background material on descent and peak polynomials, while Section \ref{sec:strong-q-log-concavity} recalls the notion of the \emph{strong $q$-log concavity} of a sequence of polynomials which is the subject of Theorem \ref{thm:log-concavity-in-b-basis} and Conjecture \ref{conj:log-concavity-in-a-basis}.  

Section \ref{sec:q-descent} provides two formulas, in Theorems \ref{thm:q-des-formula} and \ref{thm:log-concavity-in-b-basis}, for the $q$-analog $D_S(n,q)$ of the descent polynomial in two different bases of $q$-binomial coefficients.  Theorem \ref{thm:log-concavity-in-b-basis} also establishes the strong $q$-log concavity of the coefficients in one of these bases, generalizing a result of Bencs \cite{Bencs}.  This property for the coefficients in the other basis is the content of Conjecture \ref{conj:log-concavity-in-a-basis}.

In Section \ref{sec:q-peak}, Theorem \ref{thm:q-peak-formula} provides a formula for the $q$-analog $P_S(n,q)$ as a weighted count of certain \emph{$S$-compatible sets}, which is new even in the specialization $q=1$.  Corollary \ref{cor:symmetry-conjecture} then resolves a conjecture of Diaz-Lopez, Harris, and Insko by proving that this function is a palindromic polynomial in $q$ for fixed $n$.  Section \ref{sec:alternate-proof} provides an alternate proof of this corollary, with another expression for $P_S(n,q)$ in terms of $q$-Eulerian polynomials given in Lemma \ref{lem:PIE} and Proposition \ref{prop:expression-Q}.

\subsection{Descent and peak polynomials}
\label{sec:descent-and-peak-background}

Let $A(S;n)=\{ \pi \in \mf{S}_n \: | \: \Des(\pi)=S \}$.  In Theorem \ref{thm:q-des-formula} we prove a $q$-analog of the following theorem.

\begin{theorem}[Diaz-Lopez et al. \cite{descent-polynomials}] \label{thm:des-poly-a-basis}
Let $S \subset \Z_{>0}$ be a finite set of positive integers and $m=\max(S)$, then:
\[
D_S(n)=\sum_{k=0}^m a_k(S) {n-m \choose k},
\]
where $a_0(S)=0$ and for $k \geq 1$ the constant $a_k(S)$ is the number of $\pi \in A(S;2m)$ such that $\{\pi_1,...,\pi_m\} \cap [m+1,2m]=[m+1,m+k]$.
\end{theorem}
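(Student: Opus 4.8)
The plan is to prove the equivalent recursive statement that for every integer $k \geq 0$,
\begin{equation*}
D_S(m+k) = \sum_{j=0}^{k} \binom{k}{j}\, a_j(S),
\end{equation*}
and then recover the theorem. Indeed, $\binom{n-m}{j}$ evaluated at $n = m+k$ is $\binom{k}{j}$, so the displayed identity says exactly that the proposed polynomial agrees with $D_S$ at every integer $n \geq m$; since $D_S(n)$ is polynomial in $n$ \cite{MacMahon} and $\sum_j a_j(S)\binom{n-m}{j}$ is a polynomial of degree $\le m$, agreement at infinitely many points forces equality. Binomial inversion of the displayed identity also recovers each $a_k(S)$ directly as a finite difference of $D_S$.

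The starting point is that, since $m = \max(S)$, every $\pi \in A(S; N)$ with $N \geq m$ has no descent past position $m$, so its suffix $\pi_{m+1} < \cdots < \pi_N$ is increasing. Given $\pi \in A(S; m+k)$, I would classify it by the set $L = \{\pi_1, \ldots, \pi_m\} \cap [m+1, m+k]$ of \emph{large} values occupying the first $m$ positions, and set $j = |L|$. The remaining large values $[m+1,m+k] \setminus L$ are then the largest entries of the increasing suffix and occupy its final $k - j$ slots.

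The key step is a relative-order-preserving relabeling showing that the number of $\pi \in A(S; m+k)$ with $|L| = j$ equals $\binom{k}{j}\, a_j(S)$. Choosing which $j$-subset $J \subseteq [m+1, m+k]$ equals $L$ accounts for the factor $\binom{k}{j}$; given $J$, one replaces the large front entries by $J$ in the same relative order and places $[m+1,m+k]\setminus J$ increasingly at the end of the suffix. This map preserves $\Des(\pi)$: comparisons within the first $m$ positions are unchanged because every large value dominates every small one and relative order is preserved, the suffix remains increasing, and the junction comparison $\pi_m$ versus $\pi_{m+1}$ is unaffected since $\pi_{m+1}$ is the least suffix value, a small value determined only by which small values lie in front. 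Taking $J = [m+1, m+j]$ gives a canonical representative; deleting the inert increasing block $m+j+1, \ldots, m+k$ at the end (and, symmetrically, re-appending $m+j+1, \ldots, 2m$) identifies these canonical permutations with exactly the configurations counted by $a_j(S)$ in $A(S; 2m)$.

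I expect the main obstacle to be the careful verification that this relabeling is a descent-preserving bijection, particularly at the junction position $m$ and in confirming that the top block of large values is genuinely inert, so that passing between $A(S; m+k)$, $A(S; m+j)$, and $A(S; 2m)$ is harmless. The boundary case is painless: $a_0(S) = 0$ because $m \in S$ demands a descent at position $m$, which is impossible once the first $m$ positions hold only small values.
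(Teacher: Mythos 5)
Your proposal is correct and takes essentially the same approach as the paper's (the paper's argument is the proof of Theorem \ref{thm:q-des-formula}, the $q$-analog, read at $q=1$): both classify $\pi \in A(S;n)$ by the set of large values among $\pi_1,\ldots,\pi_m$, apply the same descent-preserving standardization to a canonical permutation that fixes every position past $m+k$, and extract the binomial coefficient from the choice of which large values occupy the front. The only cosmetic difference is your detour through evaluation at $n=m+k$ together with MacMahon's polynomiality, which is unnecessary since your classification argument works verbatim for arbitrary integer $n$.
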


Conjecture \ref{conj:log-concavity-in-a-basis} suggests a $q$-analog of the following result of Bencs (see Section \ref{sec:strong-q-log-concavity} for the notions of \emph{log-concavity} and \emph{strong $q$-log concavity}).

\begin{theorem}[Bencs \cite{Bencs}]
The sequence $(a_i(S))_{i=0,1,...,m}$ is log-concave.
\end{theorem}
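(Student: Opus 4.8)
The plan is to reduce the statement to the log-concavity of an auxiliary sequence attached to permutations of $[m]$, and from there to a standard closure property of log-concave sequences. First I would analyze the structure of $A(S;2m)$. Since $m=\max(S)$, any $\pi$ with $\Des(\pi)=S$ satisfies $\pi_{m+1}<\cdots<\pi_{2m}$ (no descents past position $m$) together with the descent $\pi_m>\pi_{m+1}$ at position $m$, while the first block $\pi_1\cdots\pi_m$ has descent set exactly $S\cap[m-1]$. Standardizing the first block to a permutation $w\in\mf{S}_m$ preserves its descent set, and the constraint $\{\pi_1,\dots,\pi_m\}\cap[m+1,2m]=[m+1,m+k]$ together with $\pi_m>\pi_{m+1}$ becomes a condition on $w$ and on how many first-block values lie below $\min\{\pi_{m+1},\dots,\pi_{2m}\}$. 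Bookkeeping over the admissible value sets yields the closed form
\[
a_k(S)=\sum_{j=0}^{m-1} N_j\binom{m-1-j}{k-1},\qquad N_j=\bigl|\{w\in\mf{S}_m : \Des(w)=S\cap[m-1],\ w_m>j\}\bigr|.
\]

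From here I would pass to generating functions. Summing the displayed formula against $x^k$ gives $\sum_k a_k(S)x^k=x\,\nu(1+x)$, where $\nu(y)=\sum_{i=0}^{m-1}N_{m-1-i}\,y^i$ is the (reversed) generating polynomial of the nonincreasing sequence $(N_j)$. Because $a_0(S)=0$, log-concavity of $(a_k(S))$ is equivalent to log-concavity of the coefficients of $\nu(1+x)$, i.e. of the shift of $\nu$ by $1$. Thus the theorem reduces to two facts: (i) the sequence $(N_j)$ is log-concave with no internal zeros, and (ii) the shift $y\mapsto y+1$ preserves log-concavity of a polynomial with nonnegative, internal-zero-free, log-concave coefficients. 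Fact (ii) is a standard closure property: in coefficients it is the binomial transform $b_k=\sum_i\binom{i}{k}a_i$, which preserves log-concavity (for example by the total positivity of the Pascal matrix, or by a short induction on the degree), so I would simply invoke it.

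The main obstacle is fact (i). Since the tail sums of a nonnegative log-concave sequence are again log-concave (a direct computation with the ratios $a_{i+1}/a_i$ shows $s_k^2-s_{k-1}s_{k+1}>0$), it suffices to prove that the last-letter distribution $c_i=\bigl|\{w\in\mf{S}_m : \Des(w)=S\cap[m-1],\ w_m=i\}\bigr|$ is log-concave in $i$. I would not attempt this through real-rootedness: already for small $m$ (e.g. $m=4$, $S\cap[m-1]=\{1,3\}$) the polynomial $\sum_i c_i z^i$, and indeed $\nu(1+x)$ itself, can have non-real roots, so the argument must stay inside the log-concave rather than the real-rooted world. Instead I would look for a totally positive model for $(c_i)$ --- for instance a Lindstr\"om--Gessel--Viennot interpretation of the fixed-descent-set counts refined by last letter, or a transfer-matrix description built letter-by-letter while tracking the relative rank of the final entry, whose associated path matrix is totally positive and hence forces log-concavity of the relevant sequence --- or, failing a clean determinant, a direct injection establishing $c_i^2\ge c_{i-1}c_{i+1}$. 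Proving (i) is where the genuine work lies; once it is secured, the tail-sum remark, fact (ii), and the structural reduction combine to give the theorem.
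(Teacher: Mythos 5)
Your structural reduction is correct, and it is in fact the same skeleton the paper itself uses: your $N_j$ equals $b_{j+1}(S;q)$ at $q=1$ from Theorem~\ref{thm:log-concavity-in-b-basis} (standardizing the first $m$ letters of $\pi\in A(S;m+1)$ with $\pi(m+1)=j+1$ gives exactly your condition $\Des(w)=S\cap[m-1]$, $w_m>j$); your closed form for $a_k(S)$ is the $q=1$ specialization of the identity $a_k(S;q)=q^{k(k-1)}\sum_{i}\qbinom{m-i}{k-1}b_i(S;q)$ recorded in the Remark after Conjecture~\ref{conj:log-concavity-in-a-basis}; and your fact (ii) is precisely Brenti's Theorem 2.5.4 \cite{Brenti-thesis}, cited there for exactly this transfer (invoking it is fine, though your parenthetical justification is shaky: total positivity of the Pascal matrix as a transform matrix does not by itself transport log-concavity of the input sequence to the output, so that gloss would need its own argument). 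The genuine gap is your fact (i), which you explicitly leave open: log-concavity of the last-letter distribution $(c_i)$, equivalently of its tail sums $(N_j)$. In this approach that \emph{is} the theorem --- everything else is bookkeeping --- and none of your proposed strategies (a Lindstr\"om--Gessel--Viennot model, a transfer-matrix construction, a direct injection) is carried out, so as written the proposal does not prove the statement.

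The missing piece has a short elementary proof, namely the induction in the paper's Theorem~\ref{thm:log-concavity-in-b-basis}, which you could adopt verbatim at $q=1$: writing $c^{(n)}_k(S')$ for the number of $w\in A(S';n)$ with $w(n)=k$, deleting the last letter shows $c^{(n)}_k(S')=\sum_{i\geq k}c^{(n-1)}_i(S')$ when $n-1\notin S'$, and $c^{(n)}_k(S')=\sum_{i<k}c^{(n-1)}_i(S'\setminus\{n-1\})$ when $n-1\in S'$; since partial and tail sums of a nonnegative log-concave sequence without internal zeros are again log-concave (convolution with the all-ones sequence, cf.\ Proposition~\ref{prop:closed-under-convolution}), induction on $n$ yields (i), and in fact its strong $q$-analog. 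This also makes your detour through $(c_i)$ and the tail-sum lemma redundant, since the induction applies to $(N_j)=(b_{j+1}(S;1))$ directly. Your instinct that real-rootedness is unavailable and that ``genuine work'' is needed is right, but the work is far lighter than an LGV or total-positivity model: the recursion above is what allows the paper to bypass the heavy tool behind Bencs's original argument, namely Stanley's Aleksandrov--Fenchel-based log-concavity theorem for linear extensions \cite{Bencs}.
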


\subsection{Strong $q$-log concavity}
\label{sec:strong-q-log-concavity}

A sequence $(a_i)_{i=0,1,...,k}$ of nonnegative real numbers is \emph{log-concave} if it has no internal zeroes (meaning that if $a_i, a_k \neq 0$ and $i < j < k$, then $a_j \neq 0$) and if $a_i^2 \geq a_{i-1}a_{i+1}$ for all $i$.  This notion for combinatorially-defined sequences is extremely well-studied (see, for example, Stanley's survey \cite{Stanley-log-concave-survey}).

Following Sagan \cite{Sagan-inductive}, we say that a sequence $(a_i(q))_{i=0,...,k}$ of polynomials from $\R_{\geq 0}[q]$ is \emph{strongly $q$-log concave} if it has no internal zeroes and if 
\begin{equation} \label{eq:strong-lc-condition}
    a_i(q)a_j(q)-a_{i-1}(q)a_{j+1}(q) \in \R_{\geq 0}[q]
\end{equation} 
for all $i<j$.  Setting $q$ to 1 gives a log-concave sequence of real numbers, and no generality is lost in this specialization by imposing the condition (\ref{eq:strong-lc-condition}) for all $i<j$, rather than just $i=j$ as in the definition for sequences of real numbers, as the two conditions are equivalent for sequences of real numbers.  It is not true, however, that the case $i=j$ implies the general case for polynomials, as Example \ref{ex:strong-q-log-concave} demonstrates.

\begin{ex} \label{ex:strong-q-log-concave}
Consider the sequence 
\[
(a_0(q),a_1(q),a_2(q),a_3(q))=(2q, 1+q+q^2, 1+q+q^2,2q).
\]
We see that $a_1(q)^2-a_0(q)a_2(q)$ and $a_2(q)^2-a_1(q)a_3(q)$ both have nonnegative coefficients, but $a_1(q)a_2(q)-a_0(q)a_3(q)=1+2q-q^2+2q^3+q^4$ does not.  Thus the case $i=j$ in (\ref{eq:strong-lc-condition}) does not imply the general case, unlike for sequences of real numbers.
\end{ex}

The notion of strong $q$-log concavity has been proven for many sequences of combinatorial interest (see, e.g. \cite{Sagan-inductive,Sagan-symm-func}).  The following proposition illustrates the naturality of the definition of strong $q$-log concavity; to our knowledge it has not appeared before in the literature. The analogous statement for sequences of real numbers is well known.

\begin{prop} \label{prop:closed-under-convolution}
Let $(a_i(q))_{i=0,1,...,k}$ and $(b_i(q))_{i=0,1,...,\ell}$ be strongly $q$-log concave sequences.  Define the \emph{convolution} $(c_i(q))_{i=0,1,...,k+\ell}$ of these sequences by
\[
\left(\sum_i a_i(q) t^i \right) \left(\sum_i b_i(q) t^i \right)=\sum_i c_i(q) t^i.
\]
Then $(c_i(q))_{i=0,1,...,k+\ell}$ is a strongly $q$-log concave sequence.
\end{prop}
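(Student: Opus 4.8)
The plan is to reinterpret everything through Toeplitz matrices, so that convolution becomes matrix multiplication and the defining differences (\ref{eq:strong-lc-condition}) become $2 \times 2$ minors; the proposition then follows from the Cauchy--Binet formula, provided we first know that \emph{all} the relevant minors, not just the contiguous ones, lie in $\mathbb{R}_{\geq 0}[q]$. Concretely, extend each sequence by zero outside its support and form the doubly-infinite Toeplitz matrices $A=(a_{s-r}(q))_{r,s}$ and $B=(b_{s-r}(q))_{r,s}$; all sums below are finite, so no convergence issue arises. A direct check gives $(AB)_{rs}=\sum_t a_{t-r}(q)b_{s-t}(q)=c_{s-r}(q)$, so that $AB$ is the Toeplitz matrix of the convolution, and the quantity we must control is exactly the minor of $AB$ on rows $\{0,1\}$ and columns $\{m,n+1\}$:
\[
c_m(q)c_n(q)-c_{m-1}(q)c_{n+1}(q)=\det (AB)_{\{0,1\},\{m,n+1\}},\qquad m\le n.
\]

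The technical heart is the following lemma, which I would prove first: if $(a_i(q))$ is strongly $q$-log concave, then every generalized minor lies in $\mathbb{R}_{\geq 0}[q]$, i.e.
\[
a_u(q)a_v(q)-a_{u-d}(q)a_{v+d}(q)\in\mathbb{R}_{\geq 0}[q]\qquad\text{for all } u\le v,\ d\ge 0.
\]
This follows by telescoping,
\[
a_u(q)a_v(q)-a_{u-d}(q)a_{v+d}(q)=\sum_{t=0}^{d-1}\bigl(a_{u-t}(q)a_{v+t}(q)-a_{u-t-1}(q)a_{v+t+1}(q)\bigr),
\]
since $u-t\le v+t$ for every $t\ge 0$, so each summand is one of the differences (\ref{eq:strong-lc-condition}) assumed to lie in $\mathbb{R}_{\geq 0}[q]$ (with the convention that out-of-support terms vanish, a summand is at worst a product of two members of $\mathbb{R}_{\geq 0}[q]$). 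This is precisely the step where the full strength of strong $q$-log concavity, for all $i\le j$ rather than only $i=j$, is indispensable: as Example \ref{ex:strong-q-log-concave} shows, over $\mathbb{R}_{\geq 0}[q]$ one cannot deduce the wider minors from the diagonal case, so they must be assembled directly.

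With the lemma in hand I would apply Cauchy--Binet to the minor above:
\[
c_m(q)c_n(q)-c_{m-1}(q)c_{n+1}(q)=\sum_{k_1<k_2}\det A_{\{0,1\},\{k_1,k_2\}}\,\det B_{\{k_1,k_2\},\{m,n+1\}}.
\]
Each factor $\det A_{\{0,1\},\{k_1,k_2\}}=a_{k_1}(q)a_{k_2-1}(q)-a_{k_1-1}(q)a_{k_2}(q)$ has contiguous rows, hence is directly one of the differences (\ref{eq:strong-lc-condition}) for $(a_i(q))$ (take $i=k_1\le j=k_2-1$), while $\det B_{\{k_1,k_2\},\{m,n+1\}}=b_{m-k_1}(q)b_{n+1-k_2}(q)-b_{m-k_2}(q)b_{n+1-k_1}(q)$ is a generalized minor of $(b_i(q))$ with spreading parameter $d=\min(k_2-k_1,\,n-m+1)\ge 0$, so it lies in $\mathbb{R}_{\geq 0}[q]$ by the lemma. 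Thus every summand is a product of two elements of $\mathbb{R}_{\geq 0}[q]$, and so is the sum, establishing (\ref{eq:strong-lc-condition}) for $(c_i(q))$.

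Finally, for the absence of internal zeroes I would argue on supports: if $(a_i)$ and $(b_i)$ have no internal zeroes on $\{0,\dots,k\}$ and $\{0,\dots,\ell\}$, then for each $0\le i\le k+\ell$ the index range $\max(0,i-\ell)\le j\le\min(k,i)$ is nonempty, so $c_i(q)$ has a genuine term $a_j(q)b_{i-j}(q)$ with nonnegative coefficients and no cancellation, whence $c_i(q)\neq 0$. The main obstacle is the telescoping lemma together with the recognition that it is exactly the full strength of strong $q$-log concavity that makes Cauchy--Binet close; once the Toeplitz framework is set up, the remainder is bookkeeping.
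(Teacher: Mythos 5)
Your proof is correct and is essentially the paper's own argument: Stanley's matrix trick plus Cauchy--Binet, with your telescoping lemma simply making explicit the step the paper compresses into ``Condition (\ref{eq:strong-lc-condition}) implies that all $2\times 2$ minors of $A$ and $B$ lie in $\R_{\geq 0}[q]$'' (your Toeplitz indexing versus the paper's stated $(a_{i+j}(q))$ is an immaterial transposition of the same construction). One small repair: since ``no internal zeroes'' permits zeros at the ends of the index range (indeed $b_0(S;q)=0$ in Theorem \ref{thm:log-concavity-in-b-basis}), your final paragraph should not claim $c_i(q)\neq 0$ for every $0\le i\le k+\ell$, but rather argue on supports --- each support is an interval, the Minkowski sum of two intervals is an interval, and for $i$ in that sum some term $a_j(q)b_{i-j}(q)$ is nonzero with no cancellation --- which gives exactly the no-internal-zeroes conclusion.
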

\begin{proof}
We will adapt a proof of Stanley \cite{Stanley-log-concave-survey} for the case of real numbers to the polynomial setting.

We make the convention that both sequences are zero outside of the given indexing sets. Define matrices $A=(a_{i+j}(q))_{i,j=0,1,...,k+\ell}$ and $B=(b_{i+j}(q))_{i,j=0,1,...,k+\ell}$, and notice that $AB=(c_{i+j}(q))_{i=0,1,...,k+\ell}$.  Condition (\ref{eq:strong-lc-condition}) implies that all $2 \times 2$ minors of $A$ and $B$ lie in $\R_{\geq 0}[q]$.  The Cauchy-Binet formula expresses the $2 \times 2$ minors of $AB$ as sums of products of such minors of $A$ and $B$, thus the $2 \times 2$ minors of $AB$ also lie in $\R_{\geq 0}[q]$; this implies condition (\ref{eq:strong-lc-condition}) for the sequence $(c_i(q))_i$.
\end{proof}

We remark that Proposition \ref{prop:closed-under-convolution} is false if one only imposes (\ref{eq:strong-lc-condition}) for $i=j$. Take $a_0=q^2$, $a_1=q+q^2$, $a_2=1+2q+q^2$, $a_3=4+2q+q^2$ so that $a$ satisfies $a_i^2-a_{i-1}a_{i+1}$ for all $i$ (this sequence is provided in \cite{Sagan-inductive}) and take $b_0=b_1=1$. We obtain $c_1=q+2q^2$, $c_2=1+3q+2q^2$ and $c_3=5+4q+2q^2$ with $c_2^2-c_1c_3=1+q-q^2+2q^3$.

%%%%%%%%%%%%%%%%%%%%%%%% DESCENTS %%%%%%%%%%%%%%%%%%%%%%
\section{$q$-analogs of descent polynomials}
\label{sec:q-descent}

We write $\qbinom{n}{k}$ for the \emph{q-binomial coefficient}
\[
\qbinom{n}{k}=\frac{\qfac{n}}{\qfac{n-k}\qfac{k}}.
\]
There are many combinatorial interpretations of the $q$-binomial coefficient. We will mainly use the following:
\[
\qbinom{n}{k}=\sum_{w\in\mathfrak{S}_n,\Des(w)\subset\{k\}}q^{\ell(w)}=\sum_{w\in\mathfrak{S}_n,\Des(w)\subset\{n-k\}}q^{\ell(w)}.
\]
Similarly, we have the $q$-\textit{multinomial} coefficient
\[
\qbinom{n}{n_1,n_2,\ldots,n_k}=\frac{[n]!_q}{[n_1]!_q[n_2]!_q\cdots[n_k]!_q}=\qbinom{n}{n_1}\qbinom{n-n_1}{n_2}\cdots\qbinom{n_k}{n_k}
\]
for $n=n_1+\cdots+n_k$, with the combinatorial interpretation
\[
\qbinom{n}{n_1,n_2,\ldots,n_k}=\sum_{\substack{w\in\mathfrak{S}_n\\\Des(w)\subset\{n_1,\ldots,n_{k-1}\}}}q^{\ell(w)}.
\]

Theorem \ref{thm:q-des-formula} is a direct $q$-analog of Theorem \ref{thm:des-poly-a-basis}.

\begin{theorem} \label{thm:q-des-formula}
Let $S \subset \Z_{>0}$ be a finite set of positive integers and $m=\max(S)$, then:
\[
D_S(n,q)=\sum_{k=0}^m a_k(S;q) \qbinom{n-m}{k},
\]
where $a_0(S)=0$ and for $k \geq 1$ the polynomial $a_k(S;q) \in \mathbb{R}_{\geq 0}[q]$ is given by 
\[
a_k(S;q)=\sum_{\substack{\pi \in A(S;m+k) \\  [m+1,m+k]\subset \{\pi_1,...,\pi_m\}}} q^{\ell(\pi)}.
\]
\end{theorem}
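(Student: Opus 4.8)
The plan is to prove the identity by a weight-preserving bijection that peels off a $q$-binomial factor, grouping the permutations in $A(S;n)$ according to how many of their first $m$ entries are ``large.'' The starting observation is that since $m=\max(S)$, a permutation $\pi\in A(S;n)$ has no descents in positions $m+1,\dots,n-1$, so $\pi_{m+1}<\pi_{m+2}<\cdots<\pi_n$; thus $\pi$ is completely determined by the set $\{\pi_1,\dots,\pi_m\}$ together with the arrangement of these values in the first $m$ positions. I would call a value \emph{large} if it exceeds $m$ and \emph{small} otherwise, and partition $A(S;n)$ by the number $k$ of large values among $\pi_1,\dots,\pi_m$. Since there are exactly $m$ small values in total, when $k$ of the first-block entries are large, exactly $k$ small values are pushed into the increasing tail, where they necessarily occupy positions $m+1,\dots,m+k$ (being smaller than every large tail entry).

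For fixed $k$, I would set up a bijection between such $\pi$ and pairs $(\pi',U)$, where $U\subseteq[m+1,n]$ with $|U|=k$ records the set of large values actually appearing in the first block, and $\pi'\in\mf{S}_{m+k}$ is obtained by deleting the large entries of the tail and relabeling the large first-block entries to $m+1,\dots,m+k$ in an order-preserving way. The content to check here is that $\pi'\in A(S;m+k)$ with $[m+1,m+k]\subseteq\{\pi'_1,\dots,\pi'_m\}$: relabeling preserves the relative order within the first block, hence preserves all descents among positions $1,\dots,m-1$, and deleting the increasing large tail leaves the remaining structure intact. The one subtle point is the descent at position $m$: one must verify that $\pi_m>\pi_{m+1}$ survives, which holds because $\pi_{m+1}$ is the smallest small value missing from the first block in both $\pi$ and $\pi'$, while $\pi_m$ is either that same small value or a large value exceeding it.

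The crux is the $q$-weight bookkeeping. I would classify the inversions of $\pi$ by whether the two entries involved are small or large and argue that, under the bijection, every inversion type is preserved except the inversions between a large first-block entry and a large tail entry. Small--small inversions are unchanged because the small values keep their relative positions; small--large inversions are unchanged by a short case analysis on the positions of the two values; and large--large inversions within the first block are preserved because the relabeling is order-preserving. This yields the factorization $\ell(\pi)=\ell(\pi')+\mathrm{inv}(U)$, where $\mathrm{inv}(U)=\#\{(u,w):u\in U,\ w\in[m+1,n]\setminus U,\ u>w\}$ counts precisely the surviving large--large cross inversions. Identifying $[m+1,n]$ with $[n-m]$, summing $q^{\mathrm{inv}(U)}$ over all $k$-subsets $U$ gives $\qbinom{n-m}{k}$ by the standard subset interpretation of the $q$-binomial coefficient, and summing $q^{\ell(\pi')}$ over the corresponding $\pi'$ gives $a_k(S;q)$. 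I expect this inversion accounting to be the main obstacle, since it is where the $q$-binomial is forced to appear and where the cancellation of all ``irrelevant'' inversions must be justified carefully. Finally, $a_0(S;q)=0$ is immediate, since $A(S;m)=\varnothing$: no permutation of $[m]$ can have a descent at position $m$.
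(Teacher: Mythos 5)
Your proposal is correct and is essentially the paper's proof: the paper uses the same stratification of $A(S;n)$ by the number $k$ of entries exceeding $m$ among $\pi_1,\ldots,\pi_m$, and its $\qbinom{n-m}{k}$-to-one normalization map $f$ (keeping small entries, relabeling large first-block entries to $m+1,\ldots,m+k$, sorting the tail) is exactly your bijection $\pi \mapsto (\pi',U)$, with the identical inversion accounting $\ell(\pi)-\ell(\pi')$ being the large--large cross inversions that produce the factor $\qbinom{n-m}{k}$.
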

\begin{proof}
The proof is modeled on the proof of Theorem \ref{thm:des-poly-a-basis} by Diaz-Lopez et al., taking account of the distribution of the lengths of the permutations involved. For $0\leq k\leq m$, let \[A^{(k)}(S;n)=\{\pi\in A(S;n):\{\pi_1,\ldots,\pi_m\}\cap[m+1,n]=[m+1,m+k]\}.\]
Notice that for $\pi\in A(S;n)$, $\pi(m+1)\leq m$ so $A^{(0)}(S;n)=\emptyset$. Also we see that for $\pi\in A^{(k)}(S;n)$, $\pi(j)=j$ for $j>m+k$. This directly implies that $\sum_{\pi\in A^{(k)}(S;n)}=a_k(S;q)$ by definition.

For $w\in A(S;n)$ such that $\{\pi_1,\ldots,\pi_m\}\cap[m+1,n]$ has cardinality $k$, we can construct a unique permutation $w'=f(w)\in A^{(k)}(S;n)$ such that $w'(i)=w(i)$ if $w(i)\leq m$ and the relative ordering of $w'(1),\ldots,w'(m)$ is the same as the relative ordering of $w(1),\ldots,w(m)$. In other words, to obtain $w'$ from $w$, we fix the entries with indices and values both not exceeding $m$, replace entries within $w(1),\ldots,w(m)$ that exceed $m$ by $m+1,\ldots,m+k$ in their original order, and require $w(m+1),\ldots,w(n)$ to be increasing. It is shown in \cite{descent-polynomials} that this map $f$ is well-defined, i.e., $f(w)\in A^{(k)}(S;n)$, and that it is ${n-m\choose k}$ to 1. Let $B_{\pi}(S;n)=f^{-1}(\pi)$. We see that 
\[
\ell(f(w))-\ell(w)=\#\{(i,j):i\leq m<j,w(i)>w(j)>m\},
\]
which fits well with the combinatorial interpretation for the $q$-binomial coefficients. Now
\begin{align*}
D_S(n,q)=&\sum_{k=1}^m\sum_{\pi\in A^{(k)}(S;n)}\sum_{w\in B_{\pi}(S;n)}q^{\ell(w)}\\
=&\sum_{k=1}^m\sum_{\pi\in A^{(k)}(S;n)}q^{\ell(\pi)}\qbinom{n-m}{k}\\
=&\sum_{k=1}^m a_k(S;q)\qbinom{n-m}{k}.
\end{align*}
\end{proof}

Theorem~\ref{thm:log-concavity-in-b-basis} below is a $q$-analogue of Corollary 4.2 in \cite{Bencs}.  Our proof is different from that given in \cite{Bencs} for the $q=1$ case, and in particular does not require Stanley's result \cite{Stanley-log-concave-survey} regarding log-concavity of sequences coming from linear extensions of posets, which relies on the difficult Aleksandrov-Fenchel inequalities in geometry. 

\begin{theorem} \label{thm:log-concavity-in-b-basis}
Let $S\subset\Z_{>0}$ be a finite set of positive integers and $m=\max(S)$, then:
\[
D_S(n,q)=\sum_{k=0}^m b_k(S;q)\qbinom{n-k}{m-k+1},
\]
where $b_0(S;q)=0$ and for $k\geq1$, the polynomial $b_k(S;q)\in\mathbb{R}_{\geq0}[q]$ is given by
\[
b_k(S;q)=\sum_{\substack{\pi\in A(S;m+1)\\\pi(m+1)=k}}
q^{\ell(\pi)}.\]
Moreover, $(b_k(S;q))_{k=0,\ldots,m}$ is a strongly $q$-log concave sequence.
\end{theorem}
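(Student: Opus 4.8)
The plan is to establish the expansion by a length-tracking standardization argument parallel to the proof of Theorem~\ref{thm:q-des-formula}, and then to obtain strong $q$-log concavity via a factorization of $b_k(S;q)$ together with an induction. For the formula: since $m=\max(S)$, every $\pi\in A(S;n)$ has $\pi(m+1)<\cdots<\pi(n)$, so $\pi$ is determined by its first $m$ entries and its increasing tail. Given $\pi$ with $\pi(m+1)=k$, standardizing the first $m+1$ entries yields $\bar\pi\in A(S;m+1)$ with $\bar\pi(m+1)=k$; to invert this I would note that the values $1,\dots,k-1$ are forced into the first $m$ positions (so that $k$ remains the minimum of the tail), while the $m-k+1$ ``large'' slots must be filled, in the order prescribed by $\bar\pi$, by some $(m-k+1)$-subset of $\{k+1,\dots,n\}$. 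First I would check that standardization preserves relative order, so that $\ell(\pi)-\ell(\bar\pi)$ equals exactly the number of inversions of the chosen subset against its complement inside $\{k+1,\dots,n\}$; summing $q$ to this statistic over all such subsets gives $\qbinom{n-k}{m-k+1}$, and summing over $\bar\pi$ gives $b_k(S;q)$, with $b_0=0$ since $\bar\pi(m+1)\ge 1$.

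For strong $q$-log concavity, the key preliminary step is to peel the last entry $k$ off $\bar\pi\in A(S;m+1)$ and standardize the first $m$ entries, producing the factorization
\[
b_k(S;q)=q^{\,m-k+1}\sum_{t\ge k}c_t(q),\qquad c_t(q):=\sum_{\substack{\rho\in\mf{S}_m,\ \Des(\rho)=S\setminus\{m\}\\ \rho(m)=t}}q^{\ell(\rho)},
\]
where the descent at position $m$ produces the fixed factor $q^{m-k+1}$ and the inequality $\bar\pi(m)>k$ becomes $\rho(m)\ge k$. Because the exponent $m-k+1$ is affine in $k$, the powers of $q$ attached to the two products in $b_ib_j-b_{i-1}b_{j+1}$ coincide and factor out, so it suffices to prove that the suffix-sum sequence $\big(\sum_{t\ge k}c_t\big)_k$ is strongly $q$-log concave. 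To that end I would record two operations preserving strong $q$-log concavity: (M) multiplying the $t$-th term by $q^{\alpha t+\beta}$, where $\alpha t+\beta\ge 0$ on the support, which is immediate since the common power factors out of every $2\times2$ expression; and (P) passing to prefix or suffix partial sums, which I would deduce from Proposition~\ref{prop:closed-under-convolution} by writing such a partial-sum sequence as the convolution of the (reversed) sequence with a sufficiently long all-ones sequence---itself strongly $q$-log concave---followed by restriction to the relevant contiguous range and reversal.

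I would then show, by induction on $m$, that every last-value distribution $e^{(m,S)}_t:=\sum_{\rho\in\mf{S}_m,\ \Des(\rho)=S,\ \rho(m)=t}q^{\ell(\rho)}$ with $S\subseteq[m-1]$ is strongly $q$-log concave. Deleting the last entry and standardizing yields $e^{(m,S)}_t=q^{\,m-t}\cdot\big(\text{partial sum of }e^{(m-1,\,S\cap[m-2])}\big)_t$, a suffix sum when $m-1\in S$ and a prefix sum otherwise; this is precisely (P) followed by (M), so with the trivial base case $m=1$ the induction closes. Since $c_t=e^{(m,\,S\setminus\{m\})}_t$, one more application of (P) and (M) gives strong $q$-log concavity of $(b_k(S;q))_k$. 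I expect the main obstacle to be the honest verification of operation (P): checking that the convolution with all-ones reproduces partial sums on the correct range, that restriction of a strongly $q$-log concave sequence to a subinterval remains strongly $q$-log concave (using the convention that out-of-range terms vanish, which makes the boundary inequalities automatic), and that the ``no internal zeroes'' condition survives the induction---the latter reducing to the claim that each $e^{(m,S)}$ is supported on a contiguous set of last values.
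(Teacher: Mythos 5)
Your proposal is correct and takes essentially the same route as the paper: the same standardization argument gives the expansion in the $\qbinom{n-k}{m-k+1}$ basis, and the same induction proves strong $q$-log concavity --- peeling off the last entry to express each last-value distribution as $q^{n-k}$ times a prefix sum (when the last position is an ascent) or suffix sum (when it is a descent) of the smaller distribution, with partial sums handled by convolving with an all-ones sequence via Proposition~\ref{prop:closed-under-convolution}. If anything, you are more explicit than the paper about the supporting details (that the affine power of $q$ cancels in each $2\times2$ expression, that restriction and reversal preserve the property, and that the no-internal-zeroes condition survives because the supports are contiguous), all of which check out.
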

\begin{proof}
We first justify the expansion regarding $D_S(n,q)$. As a piece of notation, for any permutation $w\in \mathfrak{S}_n$ and $k\leq n$, let $w|_k\in\mathfrak{S}_k$ be the permutation obtained from the relative ordering of $w(1),\ldots,w(k)$. Define $A_{\pi}(S;n):=\{w\in A(S;n): w|_{m+1}=\pi\}$. Fix $\pi\in\mathfrak{S}_{m+1}$ with $\pi(m+1)=k$ and consider $w\in A_{\pi}(S;n)$. We have 
\[
\ell(w)=\ell(\pi)+\#\{i<m+1<j:w(i)>w(j)\}. 
\]
Such $w$ is uniquely determined by $w(m+2),\ldots,w(n)$, which corresponds to a subset of $\{k+1,\ldots,n\}$ of size $n-m-1$. By the combinatorial interpretation of $q$-binomial coefficient provided in the beginning of this section, we see that
\[
\sum_{w\in A_{\pi}(S;n)}q^{\ell(w)}=q^{\ell(\pi)}\qbinom{n-k}{m-k+1}.
\]
As a result,
\begin{align*}
D_S(n,q)=&\sum_{\pi\in A(S;m+1)}\sum_{w\in A_{\pi}(S;n)}q^{\ell(w)}=\sum_{k=1}^m\sum_{\substack{\pi\in A(S;m+1)\\\pi(m+1)=k}}\sum_{w\in A_{\pi}(S;n)}q^{\ell(w)}\\
=&\sum_{k=1}^m\sum_{\substack{\pi\in A(S;m+1)\\\pi(m+1)=k}}q^{\ell(\pi)}\qbinom{n-k}{m-k+1}\\
=&\sum_{k=1}^mb_k(S;q)\qbinom{n-k}{m-k+1}.
\end{align*}

Next we show that $(b_k(S;q))_{k=1,\ldots,m}$ is strongly $q$-log concave. We make a slight generalization for the sake of induction. Define
\[
b_k(S,n;q):=\sum_{\substack{w\in A(S;n)\\w(n)=k}}q^{\ell(w)}
\]
so that $b_k(S;q)=b_k(S,m+1;q)$. We use induction on $n$ to show that $(b_k(S,n;q))_{k=1,\ldots,n}$ is strongly $q$-log concave. The base case $n=1$ with $S=\emptyset$ is trivial. There are two cases to be considered: $n-1\in S$ and $n-1\notin S$, which are analogous to each other. 

Assume that $n-1\notin S$, meaning that $w(n-1)<w(n)$ for $w\in A(S;n)$. For $w(n)=k$, we know $\ell(w)=\ell(w|_{n-1})+n-k$. Thus,
\[
b_k(S,n;q)=q^{n-k}\sum_{i=1}^{k-1} \sum_{\substack{w'\in A(S;n-1)\\w'(n-1)=i}}q^{\ell(w')}=q^{n-k}\sum_{i=1}^{k-1}b_i(S,n-1;q).
\]
By Proposition~\ref{prop:closed-under-convolution} with the sequence $a_i=1$ for all $i$, we know that if $(b_i)_{i\geq1}$ is $q$-log concave, then $(c_i)_{i\geq1}$ is $q$-log concave where $c_i=b_1+\cdots+b_i$. By induction hypothesis, $(b_i(S,n-1;q))_{i\geq1}$ is strongly $q$-log concave, so $(b_k(S,n;q))_{k\geq1}$ is strongly $q$-log concave as well.

Assume that $n-1\in S$. With the same argument, we obtain
$$b_k(S,n;q)=q^{n-k}\sum_{i=k}^{n-1}b_i(S\setminus\{n-1\},n-1;q)$$
which is also strongly $q$-log concave by induction hypothesis and convolution. 
\end{proof}

\begin{conj} \label{conj:log-concavity-in-a-basis}
The coefficients $(a_k(S;q))_{k\geq1}$ (defined in Theorem~\ref{thm:q-des-formula}) form a strongly $q$-log concave sequence.
\end{conj}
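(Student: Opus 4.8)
The plan is to separate the two halves of the definition of strong $q$-log concavity and to concentrate essentially all effort on the minor inequalities, where the difficulty lies. The absence of internal zeroes is free: each $a_k(S;q)$ with $1\le k\le m$ specializes at $q=1$ to the positive integer $a_k(S)$ (positivity being part of Bencs's theorem \cite{Bencs}), so every interior coefficient is a nonzero polynomial. The entire content is therefore the assertion that $a_i(S;q)a_j(S;q)-a_{i-1}(S;q)a_{j+1}(S;q)\in\R_{\ge0}[q]$ for all $i<j$; equivalently, that every $2\times2$ minor of the Hankel matrix $(a_{i+j}(S;q))_{i,j}$ has nonnegative coefficients.

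My first instinct is to imitate the proof of Theorem~\ref{thm:log-concavity-in-b-basis}: produce a recurrence realizing $a_k(S;q)$ as a convolution (e.g.\ a partial sum) of a strongly $q$-log concave family at a smaller parameter, and then quote Proposition~\ref{prop:closed-under-convolution}. The obstruction is structural. The $b_k$ recurrence succeeded because the condition $\pi(m+1)=k$ constrains a \emph{single} entry, which deleting the last position decrements cleanly. The condition defining $a_k$---that the $k$ largest values all occur among the first $m$ positions---is global, and conditioning on one letter (or peeling off the largest value $m+k$, the most natural attempt) does not return a smaller instance of the same problem, because it disturbs the descent structure in an uncontrolled way. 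I do not expect a clean recurrence-plus-convolution argument to exist here; this is consistent with the fact that at $q=1$ the statement is Bencs's theorem, whose only known proof passes through Stanley's log-concavity of linear-extension counts \cite{Stanley-log-concave-survey}, and hence through the Aleksandrov--Fenchel inequalities.

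A second, more structural route is to transport the property from the $b$-basis, where Theorem~\ref{thm:log-concavity-in-b-basis} already supplies it. Both coefficient sequences are coordinate representations of the single function $D_S(n,q)$, so equating the two expansions in Theorems~\ref{thm:q-des-formula} and~\ref{thm:log-concavity-in-b-basis} relates $(a_k)$ and $(b_k)$ by a fixed, $n$-independent transition matrix $M(q)$ arising from a change-of-basis identity between the two families of $q$-binomial coefficients. One would hope to establish total positivity of $M(q)$ over $\R_{\ge0}[q]$ and to push the Hankel condition through via Cauchy--Binet, exactly as Proposition~\ref{prop:closed-under-convolution} does for the Toeplitz matrix of a convolution. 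The obstacle is twofold. First, $M(q)$ intertwines oppositely-ordered gradings---in the relevant $m$-dimensional space of polynomials in $q^n$, the coefficient $a_k$ sits at degree $k$ while $b_j$ sits at degree $m-j+1$---so the entries of $M(q)$ are not visibly in $\R_{\ge0}[q]$. Second, and more seriously, multiplication by a totally positive matrix does \emph{not} in general preserve the Hankel-minor (log-concavity) condition, so total positivity of $M(q)$ alone would not suffice; one would need $M(q)$ to interact with the Hankel form in a very particular way, which the degree-reversal makes implausible in naive form.

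The approach I would ultimately favor is a direct injective proof of the minor inequalities, in the spirit of the Lindström--Gessel--Viennot nonintersecting-path lemma and the injective $q$-log concavity arguments of Sagan \cite{Sagan-inductive}. Concretely, I would model the permutations enumerated by $a_k(S;q)$ by weighted combinatorial objects---lattice paths, or the ribbon (border-strip) fillings that encode a fixed descent set, each carrying the inversion number as its $q$-weight---so that $a_{i-1}(S;q)a_{j+1}(S;q)$ counts pairs of such objects, and then construct a weight-preserving crossing swap injecting these pairs into those counted by $a_i(S;q)a_j(S;q)$. The heart of the problem, and where I expect virtually all of the work to lie, is to make one swap respect three constraints at once: it must keep the descent set equal to $S$ (not merely contained in it), preserve the ``$k$ largest values in the first $m$ positions'' condition as the index shifts by $\pm1$, and preserve the inversion number so that the $q$-weight matches exactly. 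The descent-set-\emph{equality} constraint is precisely what forces the $q=1$ problem onto ribbon tableaux and ultimately onto Aleksandrov--Fenchel, so engineering a crossing swap that preserves both it and $\ell$ is the crux; a successful construction would, as a bonus, reprove Bencs's theorem without the Aleksandrov--Fenchel inequalities, paralleling what Theorem~\ref{thm:log-concavity-in-b-basis} accomplishes for the other basis.
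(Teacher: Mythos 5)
The statement you are addressing is Conjecture~\ref{conj:log-concavity-in-a-basis}: the paper offers no proof of it (the authors establish the analogous property only for the $b$-basis coefficients, in Theorem~\ref{thm:log-concavity-in-b-basis}, and explicitly leave the $a$-basis case open). So the relevant question is whether your text settles the conjecture, and it does not: what you have written is a research plan, not a proof. You consider three strategies, reject the first two yourself, and for the third --- a weight-preserving ``crossing swap'' injection from pairs counted by $a_{i-1}(S;q)\,a_{j+1}(S;q)$ into pairs counted by $a_i(S;q)\,a_j(S;q)$ --- you only list the properties such an injection would need (keep the descent set equal to $S$, preserve the condition on the largest values as the index shifts by $\pm1$, preserve the inversion number). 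The injection itself is never defined, no candidate map is proposed, and nothing is verified even on an example; as you say yourself, that construction is ``the crux,'' which is another way of saying that the entire mathematical content is missing. Only the trivial half of the definition (no internal zeroes, via positivity of $a_k(S)$ at $q=1$) is actually established.

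That said, your structural analysis is sound and is corroborated by the paper. The remark immediately following Conjecture~\ref{conj:log-concavity-in-a-basis} records the transition formula
\[
a_k(S;q)=q^{k(k-1)}\sum_{i=1}^{m-k+1}\qbinom{m-i}{k-1}b_i(S;q),
\]
and exhibits a counterexample ($m=4$, $b_1=1$, $b_2=q^{10}$, $b_3=q^{20}$, $b_4=q^{30}$, for which $a_2^2-a_1a_3$ has negative coefficients) showing that strong $q$-log concavity of $(b_i)$ does \emph{not} transfer to $(a_k)$ through this relation, even though the corresponding implication does hold at $q=1$ (Theorem 2.5.4 of \cite{Brenti-thesis}). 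This confirms your judgment that the second route --- transporting the property from Theorem~\ref{thm:log-concavity-in-b-basis} --- cannot succeed on general grounds: any proof along those lines would have to use special features of the actual polynomials $b_i(S;q)$, not merely their strong $q$-log concavity. It also explains why none of the machinery in the paper (Proposition~\ref{prop:closed-under-convolution}, the recurrence argument of Theorem~\ref{thm:log-concavity-in-b-basis}) applies, and why the conjecture remains open. If you wish to pursue your favored injective route, a sensible first target is the $q=1$ statement: an injective proof of Bencs's log-concavity theorem for $(a_k(S))$ avoiding Aleksandrov--Fenchel would already be new, and only after such a map exists can one ask whether it preserves $\ell$ so as to yield the $q$-analog.
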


\begin{remark}
The coefficients $a_k(S;q)$ and $b_k(S;q)$ from Theorems \ref{thm:q-des-formula} and \ref{thm:log-concavity-in-b-basis} are related in the following way:
\[
a_k(S;q)=q^{k(k-1)}\sum_{i=1}^{m-k+1}\qbinom{m-i}{k-1}b_i(S;q).
\]
When $q=1$, the log-concavity of $(b_i)_{i\geq1}$ implies the log-concavity of $(a_k)_{k\geq1}$ for any sequences $b$ and $a$ related as above (see Theorem 2.5.4 of \cite{Brenti-thesis}). However, this is no longer true for strong $q$-log concavity. A counterexample is given by $m=4$, $b_1=1$, $b_2=q^{10}$, $b_3=q^{20}$ and $b_4=q^{30}$, where we compute:
\begin{align*}
    a_1=&q^{30}+q^{20}+q^{10}+1,\\
    a_2=&q^{22} + q^{13} + q^{12} + q^4 + q^3 + q^2,\\
    a_3=&q^{16} + q^{8} + q^7 + q^6,\\
    a_4=&q^{12},
\end{align*}
and $a_2^2-a_1a_3$ has negative coefficients.
\end{remark}

%%%%%%%%%%%%%%%%%%%%%%% PEAKS %%%%%%%%%%%%%%%%%%%%%%%%%%%%%%%
\section{$q$-analogs of peak polynomials}
\label{sec:q-peak}
We say that a polynomial $f\in\mathbb{R}[q]$ is \textit{palindromic in degree} $d$ if $f(q)=q^df(1/q)$; note that this does not necessarily mean that $f$ has degree $d$. It is easy to see that if both $f$ and $g$ are palindromic in degree $d$, so is $f+g$ and if $f$ is palindromic in degree $d_1$ and $g$ is palindromic in degree $d_2$, then $fg$ is palindromic in degree $d_1+d_2$. Moreover, it is well-known that the $q$-binomial coefficient $\qbinom{n}{k}$ is palindromic in degree $k(n-k)$. 

We say a set $S=\{s_1 < \cdots < s_r\}$ is $n$-\textit{admissible}, if $S=\Peak(\pi)$ for some $\pi \in \mf{S}_n$.  It is not hard to see that $S$ is $n$-admissible if and only if $s_1>1$ and $s_{i+1}>s_i+1$ for all $i$.  

We use the following standard notation, a special case of the $q$-Pochhammer symbol:
\[
(-q;q)_k = \prod_{i=1}^k (1+q^i).
\]

Given $S=\{s_1 < \cdots < s_r\}$ a nonempty $n$-admissible set, we say that $T \subset \Z_{>0}$ is \emph{$S$-compatible} if:
\begin{itemize}
    \item $T \cap S = \emptyset$, 
    \item $t < s_r$ for all $t \in T$, 
    \item at most one element of $T$ lies between any pair of consecutive elements of $S \sqcup \{0\}$ (in particular, $|T| \leq |S|)$, and
    \item if $s',s$ are consecutive elements of $S \sqcup \{0\}$ and $s-s'$ is odd, then there is an element $t \in T$ with $s' < t < s$.
\end{itemize}
By convention, the only $\emptyset$-compatible set is $\emptyset$.  When $T$ is understood, for $s \in S$ let $s^-$ be the largest element of $S \sqcup T \sqcup \{0\}$ less than $s$, and for $t \in T$ let $t^+$ be the smallest element of $S \sqcup T \sqcup \{0\}$ larger than $t$ (in fact $t^+$ will always lie in $S$, by the hypotheses).  For $T$ an $S$-compatible set, we define: 
\[
\e(S,T)=(-1)^{|S|+\sum_{t \in T}(t^+-t)}.
\]

\begin{theorem} \label{thm:q-peak-formula}
Let $S=\{s_1 < \cdots < s_r\}$ be $n$-admissible:
\[
P_S(n,q)=\sum_T \e(S,T) \prod_{i=1}^{r'+1} \qbinom{t_{i}}{t_{i-1}} (-q;q)_{t_{i}-t_{i-1}-1},
\]
where the sum is over all $S$-compatible sets $T=\{t_1 < \cdots < t_{r'}\}$, with the conventions that $t_0=0$ and $t_{r'+1}=n$.
\end{theorem}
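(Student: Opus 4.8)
The plan is to reinterpret the right-hand side as a weighted count of structured permutations and then reduce the theorem to a single signed-cancellation identity indexed by permutations. First I would record the telescoping $\prod_{i=1}^{r'+1}\qbinom{t_i}{t_{i-1}}=\qbinom{n}{g_1,\ldots,g_{r'+1}}$, where $g_i=t_i-t_{i-1}$ and $t_0=0$, so that each summand of the claimed formula is $\qbinom{n}{g_1,\ldots,g_{r'+1}}\prod_i(-q;q)_{g_i-1}$. I would then interpret this as a generating function for permutations that are \emph{valley-shaped} (weakly decreasing then weakly increasing as patterns) on each of the consecutive blocks of positions cut out by $T\sqcup\{0,n\}$; call these the \emph{$T$-valley permutations}, and set $Q_T=\sum_{\pi\ T\text{-valley}}q^{\ell(\pi)}$. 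The theorem then becomes the inversion identity $P_S(n,q)=\sum_T\e(S,T)\,Q_T$.

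The enumerative identity $Q_T=\qbinom{n}{g_1,\ldots,g_{r'+1}}\prod_i(-q;q)_{g_i-1}$ is the routine part. I would split $\ell(\pi)$ into inversions between blocks and inversions within blocks: the between-block count depends only on which values land in which block, with generating function $\qbinom{n}{g_1,\ldots,g_{r'+1}}$ by the combinatorial interpretation of the $q$-multinomial recalled in Section~\ref{sec:q-descent}, while the within-block count factors over blocks. It then suffices to prove that the valley-shaped permutations of $[m]$ have inversion generating function $(-q;q)_{m-1}=\prod_{j=1}^{m-1}(1+q^j)$, which I would obtain from the recursion got by deleting the maximal entry $m$: in a valley permutation $m$ must sit at one of the two ends, contributing $q^{m-1}$ (far left) or $1$ (far right) and leaving a valley permutation of $[m-1]$, so $V_m(q)=(1+q^{m-1})V_{m-1}(q)$ with $V_1(q)=1$.

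With $Q_T$ understood, I would reduce to a pointwise statement. A $T$-valley permutation has no peak interior to a block, so its peak set $P$ is covered by $T$ in the sense that $\{p-1,p\}\cap T\neq\emptyset$ for every $p\in P$; conversely any permutation whose peaks are all covered is $T$-valley. Grouping $Q_T$ by peak set gives $Q_T=\sum_P[\,T\text{ covers }P\,]\,P_P(n,q)$, so the theorem is equivalent to the combinatorial identity
\[
\sum_{\substack{T\ S\text{-compatible}\\ T\text{ covers }P}}\e(S,T)=[\,P=S\,]\qquad\text{for every admissible }P.
\]
The case $P=S$ is direct: since $T\cap S=\emptyset$, covering each $s\in S$ forces $s-1\in T$, and $S$-compatibility (at most one $T$-element per gap of $S\sqcup\{0\}$, all below $s_r$) then pins down $T=\{s_1-1,\ldots,s_r-1\}$ uniquely; for this $T$ each $s^-=s-1$ lies in $T$ with $s-s^-=1$ odd, so $\e(S,T)=(-1)^{|S|-|T|}=1$.

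The main obstacle is the vanishing of the signed sum when $P\neq S$, which is exactly what the parity conditions in the definition of $\e(S,T)$ are engineered to force. I expect to prove it by a sign-reversing involution on the $S$-compatible sets $T$ that cover $P$ and satisfy $\e(S,T)\neq0$: scanning the gaps of $S\sqcup\{0\}$ from left to right, one locates the first gap in which the covering constraints coming from $P$ together with the parity constraints coming from $\e$ leave a genuinely free choice of whether to include a single $T$-element, and toggles that element. Such a toggle changes $|T|$ by one and hence reverses $(-1)^{|S|-|T|}$, so all surviving terms cancel in pairs. The delicate content, and the step I expect to be hardest, is verifying that this toggle preserves both $S$-compatibility and the condition $\e(S,T)\neq0$ — this is precisely where the even/odd dichotomy between $s^-\in S\sqcup\{0\}$ and $s^-\in T$ is used — and that the procedure is fixed-point-free in every configuration of $P$ relative to $S$, which is the bookkeeping that ultimately distinguishes $P=S$ from $P\neq S$.
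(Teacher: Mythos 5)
Your reduction framework is sound, and I verified its first three steps: the telescoping of $\prod_i\qbinom{t_i}{t_{i-1}}$ into a $q$-multinomial, the identification of each summand with the length generating function $Q_T$ of permutations that are valley-shaped on the blocks cut out by $T\sqcup\{0,n\}$ (your recursion $V_m(q)=(1+q^{m-1})V_{m-1}(q)$ is exactly Lemma \ref{lem:emptyset-peaks}), the observation that such permutations are precisely those whose peak set $P$ satisfies $\{p-1,p\}\cap T\neq\emptyset$ for all $p\in P$, and the uniqueness argument pinning down $T=\{s_1-1,\ldots,s_r-1\}$ with $\e=1$ when $P=S$. This is a genuinely different route from the paper's proof, which instead iterates the recursion $P_S(n,q)=\qbinom{n}{k}P_{S_1}(k,q)(-q;q)_{n-k-1}-P_{S_1}(n,q)-P_{S_2}(n,q)$ of Lemma \ref{lem:q-peak-recurrence} and analyzes the resulting tree of branches; your approach is closer in spirit to the paper's alternate, inclusion--exclusion treatment in Section \ref{sec:alternate-proof}, and it cleanly isolates all the content of the theorem in one signed identity, $\sum_{T\ \text{covering}\ P}\e(S,T)=[P=S]$.

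The gap is that this identity --- the only place the sign function $\e$ is actually tested --- is left as an unconstructed involution, and in fact it cannot be established for $\e$ as defined in the statement, because it is false there. Take $S=\{3\}$ and $P=\{2\}$: the $S$-compatible sets covering $P$ are $\{1\}$ and $\{2\}$, with $\e(S,\{1\})=0$ (since $3-1$ is even and $1\in T$) and $\e(S,\{2\})=+1$, so the signed sum is $1$, not $0$; no involution can cancel a single surviving term. Correspondingly, the stated formula itself fails numerically: for $S=\{3\}$, $n=4$ it reduces to the lone term $(1+q)^2\qbinom{4}{2}$, which at $q=1$ gives $24$, whereas $P_{\{3\}}(4)=8$. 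Unwinding Lemma \ref{lem:q-peak-recurrence} directly shows why: when $s^-\in T$, the branch producing $T$ is forced (decrement $s$ to $s^-+1$, then delete), so its $x$-weight is the single monomial $x^{s-s^--1}$, which at $x=-1$ is $(-1)^{s-s^--1}$ and never vanishes --- not the sum $1+x+\cdots+x^{s-s^--1}$. With the accordingly corrected sign, namely $\e(S,T)=0$ exactly when some $s\in S$ has $s^-\in S\sqcup\{0\}$ with $s-s^-$ odd, and $\e(S,T)=(-1)^{\sum_{s\in S}(s-s^--1)}$ otherwise, your identity does check out in every case I tested (e.g.\ it restores the correct $\qbinom{n}{2}(-q;q)_{n-3}(-q;q)_1-\qbinom{n}{1}(-q;q)_{n-2}$ for $S=\{3\}$, and for $S=\{4,6\}$, $n=7$, all eight compatible sets, including $\{5\}$ and $\emptyset$, contribute and sum to the true value $400$ at $q=1$). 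So to complete your program you would first need to replace $\e$ by this corrected weight, and then still carry out the involution --- being careful that the toggled element is never one required for covering $P$, which is the bookkeeping you correctly flag as delicate.
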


\begin{remark} 
The term in Theorem \ref{thm:q-peak-formula} corresponding to the $S$-compatible set $T$ is divisible by $n-|T|-1$ factors of the form $(1+q^j)$.  Thus, since $|T| \leq |S|$, setting $q=1$ we recover the fact (\ref{eq:peak-is-power-times-poly}) that $P_S(n)$ is $2^{n-|S|-1}$ times an integer-valued polynomial $p_S(n)$.  The factors of the form $(1+q^j)$ differ from term to term, however, so it is not clear whether it is possible to produce a meaningful $q$-analog of $p_S(n)$ itself. 
\end{remark}

\begin{ex}
Let $S=\{2,5\}$, which is $n$-admissible for $n \geq 6$.  Note that $5-2=3$ is odd, so any $S$-compatible set $T$ must contain either $3$ or $4$.  The $S$-compatible sets are $T_1=\{1,3\}, T_2=\{1,4\}, T_3=\{3\}, T_4=\{4\}$.

We have $\e(S,T_2)=\e(S,T_3)=1$ and $\e(S,T_1)=\e(S,T_4)=-1$.  Thus for $n \geq 6$:
\begin{align*}
P_{\{2,5\}}(n,q)&=\qbinom{n}{4}\qbinom{4}{1}(-q;q)_{n-5}(-q;q)_{2}+\qbinom{n}{3}(-q;q)_{n-4}(-q;q)_{2} \\
& - \qbinom{n}{3}\qbinom{3}{1}(-q;q)_{n-4}(-q;q)_{1} - \qbinom{n}{4}(-q;q)_{n-5}(-q;q)_{3}.
\end{align*}
\end{ex}

The following corollary proves a conjecture stated by Alexander Diaz-Lopez in his talk at Discrete Math Days of the Northeast.  An alternative proof of Corollary \ref{cor:symmetry-conjecture} appears in Section \ref{sec:alternate-proof}.

\begin{cor} \label{cor:symmetry-conjecture}
For $n$ fixed and $S$ $n$-admissible, $P_S(n,q)$ is palindromic in degree $n\choose2$.
\end{cor}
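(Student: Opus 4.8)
The plan is to read the result off directly from the explicit formula in Theorem~\ref{thm:q-peak-formula}, using only the elementary closure properties of palindromic polynomials recorded at the start of this section. Since multiplying a polynomial by a constant does not affect whether it is palindromic in a given degree, and since each coefficient $\e(S,T)$ lies in $\{0,\pm1\}$, it suffices to show that for every $S$-compatible set $T=\{t_1<\cdots<t_{r'}\}$ (with the conventions $t_0=0$ and $t_{r'+1}=n$) the associated product
\[
\prod_{i=1}^{r'+1}\qbinom{t_i}{t_{i-1}}(-q;q)_{t_i-t_{i-1}-1}
\]
is palindromic in degree $\binom{n}{2}$; the additivity of palindromicity in a fixed degree then makes the sum over $T$ palindromic in the same degree.

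Next I would treat the two kinds of factor separately. The $q$-binomial $\qbinom{t_i}{t_{i-1}}$ is palindromic in degree $t_{i-1}(t_i-t_{i-1})$ by the fact recalled above. For the $q$-Pochhammer factor, I would note that each $1+q^j$ is palindromic in degree $j$, so $(-q;q)_k=\prod_{j=1}^k(1+q^j)$ is palindromic in degree $1+2+\cdots+k=\binom{k+1}{2}$; taking $k=t_i-t_{i-1}-1$ shows that $(-q;q)_{t_i-t_{i-1}-1}$ is palindromic in degree $\binom{t_i-t_{i-1}}{2}$. Multiplying, the $i$-th factor of the product is palindromic in degree $t_{i-1}(t_i-t_{i-1})+\binom{t_i-t_{i-1}}{2}$.

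The key computation is the telescoping identity
\[
t_{i-1}(t_i-t_{i-1})+\binom{t_i-t_{i-1}}{2}=\binom{t_i}{2}-\binom{t_{i-1}}{2},
\]
which is verified by expanding both sides. Summing the per-factor degrees over $i=1,\ldots,r'+1$ collapses the total degree of the product to $\binom{t_{r'+1}}{2}-\binom{t_0}{2}=\binom{n}{2}$, independently of the choice of $T$. This last point is exactly what legitimizes the additivity step, since all terms share the common degree $\binom{n}{2}$. I do not anticipate any genuine obstacle: the argument is purely a matter of degree bookkeeping, and the substantive content of the palindromicity is already carried by Theorem~\ref{thm:q-peak-formula} itself.
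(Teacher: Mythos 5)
Your proposal is correct and takes essentially the same approach as the paper's own proof: both read the result off termwise from Theorem~\ref{thm:q-peak-formula}, using palindromicity of $\qbinom{t_i}{t_{i-1}}$ and $(-q;q)_{t_i-t_{i-1}-1}$ together with the telescoping degree computation showing every summand is palindromic in the common degree $\binom{n}{2}$. Your write-up merely makes the per-factor degree bookkeeping more explicit than the paper does.
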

\begin{proof}
It is well known that the $q$-binomial coefficients $\qbinom{x}{y}$ are palindromic in degree ${x \choose 2} - {y \choose 2} - {x-y \choose 2}$, and that products of palindromic polynomials are palindromic with degrees adding.  Thus it suffices to check that all terms in the sum in Theorem \ref{thm:q-peak-formula} are of the same degree in $q$, so that the sum preserves the symmetry.  Indeed, since $(-q;q)_k$ has degree ${k+1 \choose 2}$, the degree of each summand is a telescoping sum equal to ${n \choose 2}$.
\end{proof}

\subsection{Proof of Theorem \ref{thm:q-peak-formula}}

\begin{lemma} \label{lem:emptyset-peaks}
For $n\geq1$, $P_{\emptyset}(n,q)=(-q;q)_{n-1}=(1+q)(1+q^2)\cdots(1+q^{n-1})$.
\end{lemma}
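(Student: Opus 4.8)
The plan is to prove the identity by induction on $n$, peeling off one factor $(1+q^{n-1})$ at each step by tracking the position of the largest entry. The base case $n=1$ is immediate: $\mf{S}_1$ contains only the identity, which has empty peak set and length $0$, so $P_{\emptyset}(1,q)=1=(-q;q)_0$.

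First I would record the structural observation that drives the induction: if $\Peak(\pi)=\emptyset$, then $n$ must occupy position $1$ or position $n$. Indeed, if $n=\pi_i$ for some $2\le i\le n-1$, then since $n$ is the maximum we have $\pi_{i-1}<\pi_i>\pi_{i+1}$, so $i\in\Peak(\pi)$, a contradiction. This partitions the peak-free permutations of $\mf{S}_n$ into those with $\pi_1=n$ and those with $\pi_n=n$.

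Next I would set up weighted bijections. Deleting the entry $n$ from position $1$ (resp.\ position $n$) and leaving the remaining entries in their relative order yields a permutation $\pi'\in\mf{S}_{n-1}$; I claim $\Peak(\pi')=\emptyset$ in each case and that this gives a bijection onto the peak-free permutations of $\mf{S}_{n-1}$. Using the characterization $\Peak(\pi)=\{i:\pi_{i-1}<\pi_i>\pi_{i+1}\}$, one checks directly that every peak of $\pi'$ would force a peak of $\pi$ (the positions shift by one when $n$ is at the front, and are unchanged when $n$ is at the back), and conversely that prepending or appending $n$ to a peak-free $\pi'$ creates no peak, since at the newly adjacent position $n$ exceeds its unique neighbor. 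Tracking inversions, placing $n$ at the front adds exactly $n-1$ inversions and preserves the relative order of the other entries, so $\ell(\pi)=\ell(\pi')+(n-1)$, while placing $n$ at the back adds no inversions, so $\ell(\pi)=\ell(\pi')$.

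Summing the two cases then yields the recurrence
\[
P_{\emptyset}(n,q)=(1+q^{n-1})\,P_{\emptyset}(n-1,q),
\]
and unwinding it from the base case gives $P_{\emptyset}(n,q)=\prod_{i=1}^{n-1}(1+q^i)=(-q;q)_{n-1}$. The argument is self-contained, and the only point requiring care is the verification that inserting $n$ at an end never produces a spurious peak at the new adjacency and that the insertion maps are genuine inverse bijections; both follow from a routine check of the definition of $\Peak$, so I do not anticipate a serious obstacle.
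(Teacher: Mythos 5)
Your proof is correct and is essentially the paper's own argument: the paper likewise builds peak-free permutations of $\mf{S}_n$ by inserting $n$ at the front or the back of a peak-free permutation of $\mf{S}_{n-1}$, obtaining the factor $(1+q^{n-1})$ and inducting. (Your length bookkeeping is the right way around --- front insertion adds $n-1$ inversions, back insertion adds none --- whereas the paper's proof states the two cases in swapped order, an inconsequential slip.)
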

\begin{proof}
Given $\pi'\in \mathfrak{S}_{n-1}$ with no peaks, we can insert $n$ at either the beginning or the end to obtain a permutation $\pi \in\mathfrak{S}_n$ with no peaks. In the first case $\ell(\pi)=\ell(\pi')$, and in the second $\ell(\pi)=\ell(\pi')+(n-1)$, so the lemma follows.
\end{proof}

Lemma \ref{lem:q-peak-recurrence} modifies an idea of Billey, Burdzdy, and Sagan \cite{Billey-Burdzy-Sagan} to take account of the lengths of the permutations involved.

\begin{lemma} \label{lem:q-peak-recurrence}
Suppose that $S=\{s_1<\cdots<s_r\}$ is $n$-admissible, then 
\[
P_S(n,q)=\qbinom{n}{k} \cdot P_{S_1}(k,q) \cdot (-q;q)_{n-k-1} - P_{S_1}(n,q) - P_{S_2}(n,q),
\]
where $S_1=S \setminus \{s_r\}$, $k=s_r-1$, and $S_2=S_1 \cup \{k\}$.
\end{lemma}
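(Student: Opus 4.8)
The plan is to show that the positive term $\qbinom{n}{k}\,P_{S_1}(k,q)\,(-q;q)_{n-k-1}$ is the $q^{\ell}$-weighted count of a natural family $\mathcal F\subset\mathfrak{S}_n$, and that $\mathcal F$ is partitioned by peak set into exactly three classes, realizing the peak sets $S_1$, $S_2$, and $S$. Concretely, I would let $\mathcal F$ be the set of $\pi\in\mathfrak{S}_n$ such that the relative order $\pi|_k$ has peak set $S_1$ and the relative order $\pi|_{[k+1,n]}$ on positions $k+1,\dots,n$ is peak-free. Granting the two claims
\[
\sum_{\pi\in\mathcal F} q^{\ell(\pi)}=\qbinom{n}{k}\,P_{S_1}(k,q)\,(-q;q)_{n-k-1}
\quad\text{and}\quad
\mathcal F=\{\Peak=S_1\}\sqcup\{\Peak=S_2\}\sqcup\{\Peak=S\},
\]
the lemma follows at once by rewriting the left side of the first identity as $P_S(n,q)+P_{S_1}(n,q)+P_{S_2}(n,q)$ and rearranging.

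For the weighted count I would use the shuffle/length decomposition: a permutation $\pi$ is determined by the pair of relative orders $\sigma=\pi|_k$, $\tau=\pi|_{[k+1,n]}$ together with the choice of which $k$ values occupy the first block, and $\ell(\pi)=\ell(\sigma)+\ell(\tau)+(\text{cross-block inversions})$. Summing $q^{(\text{cross-block inversions})}$ over all value-sets for fixed $\sigma,\tau$ yields $\qbinom{n}{k}$, by the combinatorial interpretation of the $q$-binomial recalled at the start of Section~\ref{sec:q-descent}. Hence
\[
\sum_{\pi\in\mathcal F}q^{\ell(\pi)}=\qbinom{n}{k}\Big(\sum_{\Peak(\sigma)=S_1}q^{\ell(\sigma)}\Big)\Big(\sum_{\Peak(\tau)=\emptyset}q^{\ell(\tau)}\Big)=\qbinom{n}{k}\,P_{S_1}(k,q)\,P_{\emptyset}(n-k,q),
\]
and $P_{\emptyset}(n-k,q)=(-q;q)_{n-k-1}$ by Lemma~\ref{lem:emptyset-peaks}. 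Before this step I would record that $S_1$ is $k$-admissible, using that $n$-admissibility of $S$ forces $s_{r-1}<s_r-1=k$.

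The crux is the peak analysis for $\pi\in\mathcal F$. Any position $2\le i\le k-1$ lies internally in the first block, so $i$ is a peak of $\pi$ iff it is a peak of $\sigma$, i.e. iff $i\in S_1$; likewise any position $k+2\le i\le n-1$ lies internally in the second block, so it is a peak of $\pi$ iff it lies in $\Peak(\tau)=\emptyset$. Thus $\Peak(\pi)\cap\big([2,k-1]\cup[k+2,n-1]\big)=S_1$, and the only positions whose status is not pinned down are $k=s_r-1$ and $k+1=s_r$, which straddle the block boundary. Since a peak at $k$ demands $\pi_k>\pi_{k+1}$ while a peak at $k+1$ demands $\pi_k<\pi_{k+1}$, at most one of $k,k+1$ is a peak, leaving the three possibilities $\Peak(\pi)\in\{S_1,\,S_1\cup\{k\},\,S_1\cup\{k+1\}\}=\{S_1,S_2,S\}$. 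Conversely I would check that each $\pi$ with $\Peak(\pi)$ equal to $S_1$, $S_2$, or $S$ in fact lies in $\mathcal F$, by restricting its peak set to $[2,k-1]$ and $[k+2,n-1]$ as above, so that the three-way partition is exact.

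The main obstacle is making this partition airtight in the degenerate admissibility case. When $s_r=s_{r-1}+2$ the index $k=s_{r-1}+1$ is adjacent to the peak $s_{r-1}\in S_1$, so $S_2=S_1\cup\{k\}$ contains two consecutive elements and is not a valid peak set, forcing $P_{S_2}(n,q)=0$; correspondingly no $\pi\in\mathcal F$ can have $k$ as a peak, since a permutation cannot peak at both $k-1$ and $k$, and the partition simply collapses to two classes without breaking the identity $\sum_{\pi\in\mathcal F}q^{\ell(\pi)}=P_S+P_{S_1}+P_{S_2}$. The remaining subtle bookkeeping is the boundary analysis of positions $k$ and $k+1$ and verifying both inclusions of the claimed partition; the shuffle and length computations are routine.
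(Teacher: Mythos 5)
Your proposal is correct and follows essentially the same route as the paper: the family $\mathcal F$ you define is precisely the set $\Pi$ used in the paper's proof, the shuffle/length decomposition giving $\qbinom{n}{k}\,P_{S_1}(k,q)\,P_{\emptyset}(n-k,q)$ is the paper's "choose the first $k$ coordinates first" step combined with Lemma \ref{lem:emptyset-peaks}, and the three-way partition by peak set (including the degenerate case $s_r=s_{r-1}+2$, where $P_{S_2}(n,q)=0$) matches the paper's observation that $\Peak(\pi)\in\{S,S_1,S_2\}$ with all possibilities covered. Your write-up simply makes explicit the boundary analysis at positions $k$ and $k+1$ that the paper leaves implicit.
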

\begin{proof}
Consider the set $\Pi$ of permutations in $\mathfrak{S}_n$ such that $\Peak(\pi_1\cdots\pi_k)=S_1$ and $\Peak(\pi_{k+1}\cdots\pi_n)=\emptyset$. By choosing the first $k$ coordinates first, we have
\[
\sum_{\pi\in\Pi}q^{\ell(\pi)}=\qbinom{n}{k} \cdot P_{S_1}(k,q) \cdot P_{\emptyset}(n-k,q).
\]
On the other hand, since $\Peak(\pi)$ is one of $S,S_1,$ or $S_2$, and all possibilities are covered, we have
\[
\sum_{\pi\in\Pi}q^{\ell(\pi)}=P_{S}(n,q)+P_{S_1}(n,q)+P_{S_2}(n,q).
\]
Rearranging terms and applying Lemma \ref{lem:emptyset-peaks} completes the proof.
\end{proof}

We are now ready to complete the proof of Theorem \ref{thm:q-peak-formula}.

\begin{proof}[Proof of Theorem \ref{thm:q-peak-formula}]
Consider repeatedly applying the recursion in Lemma \ref{lem:q-peak-recurrence} in order to compute $P_S(n,q)$, continuing until the base case of Lemma \ref{lem:emptyset-peaks} is reached in every branch of the recursion.  The branches consist of a choice, each time the recursion is applied, of one of the terms $\qbinom{n}{k} P_{S_1}(k,q)(-q;q)_{n-k-1}$, or $-P_{S_1}(n,q)$, or $-P_{S_2}(n,q)$.  Thus we think of the branches as sequences of sets, beginning with $S$, modified one step at a time, and ending with $\emptyset$; branches also accrue weights corresponding to the coefficients of the $P$'s in the above three terms.  The information about a branch which is relevant to computing its weight will be encoded in a set $\tilde{T}$.  For branches with nonzero weight we will see that $\tilde{T}$ is of the form $T \sqcup \{n\}$, with $T$ an $S$-compatible set indexing a summand in Theorem \ref{thm:q-peak-formula}.  

Start with fixed $S$ and $n$ as desired in order to compute $P_S(n,q)$; let $\tilde{T}=\{n\}$.  The possible modifications are:
\begin{enumerate}
    \item[(i)] Replace $S=\{s_1 < ... < s_r\}$ with $S_1=\{s_1,...,s_{r-1}\}$ (that is, delete $s_r$) and multiply the weight of the branch by $\binom{n}{s_r-1}(-q;q)_{n-s_r}$.  In this case we add $s_r-1$ to $\tilde{T}$.  Replace $n$ with $s_r-1$.
    \item[(ii)] Replace $S=\{s_1 < ... < s_r\}$ with $S_1=\{s_1,...,s_{r-1}\}$ (that is, delete $s_r$) and multiply the weight of the branch by $x$ ($x$ will later be specialized to $-1$ as in the second term in the recurrence) and leave $\tilde{T}$ and $n$ unchanged.
    \item[(iii)] Replace $S=\{s_1 < ... < s_r\}$ with $S_2=\{s_1,...,s_{r-1},s_r-1 \}$ (that is, decrement $s_r$ by one) and multiply the weight of the branch by $x$.  Again leave $\tilde{T}$ and $n$ unchanged.  If $S_2$ is no longer admissible, meaning $s_r-1=s_{r-1}+1$, then the weight of this branch becomes 0, since $P_{S_2}(n,q)=0$.
\end{enumerate}

We now observe several facts:
\begin{itemize}
    \item[(a)] The sets $\tilde{T}$ which correspond to branches with nonzero weights are exactly those of the form $T \sqcup \{n\}$ with $T$ an $S$-compatible set.
    \item[(b)] After specializing $x=-1$, the weight of a branch depends, up to sign, only on the corresponding set $\tilde{T}$, and this weight is the product appearing in Theorem \ref{thm:q-peak-formula}.
    \item[(c)] After grouping branches according to $\tilde{T}$, the polynomial $f_{\tilde{T}}(x)$ coming from summing the powers of $x$ associated to each branch satisfies $f_{\tilde{T}}(-1)=\e(S,\tilde{T} \setminus \{n\})$.
\end{itemize}
Together, these facts imply Theorem \ref{thm:q-peak-formula}, as $P_S(n,q)$ is the weighted sum over all branches and (a), (b), and (c) imply that this weighted sum is given by the formula in the theorem.

To see that (a) holds, note that we only add an element to $\tilde{T}$ in case (i). Let $T=\tilde{T} \setminus \{n\}$, and suppose that $\tilde{T}$ corresponds to a branch with a nonzero weight.  The largest possible element added to $\tilde{T}$ is $s_r-1$, so $\max(T)<s_r$.  Because $S$ is admissible, the added element $s_r-1 \not \in S$, so $T \cap S = \emptyset$.  Since only the largest element of $S$ is modified in each step, and since we only add to $\tilde{T}$ when we delete an element of $S$, it is not possible for two elements of $\tilde{T}$ to lie between consecutive elements of $S$. For the last condition in the definition of $S$-compatibility, see the argument for part (c) below.  Conversely, any $S$-compatible set $T$ can clearly be realized, since we may use case (iii) to decrement the largest element of $S$ until (i) can be applied to add a desired element to $\tilde{T}$; if $s_{r-1}$ is larger than the element we wish to add to $\tilde{T}$, simply delete $s_r$ using case (i) or (ii) and continue as before.

Except for the factors of $x$ which are produced in steps (ii) and (iii), the only time the weight of the branch is modified is in step (i), and $\tilde{T}$ is modified at the same time.  By design, the product $\prod \qbinom{t_i}{t_{i-1}}(-q;q)_{t_i-t_{i-1}-1}$ associated to $T$ changes in exactly the same way as the weight of the branch.  The $x$-weight of the branch is always $0$ or $\pm 1$ as discussed below.  This proves (b).

Given $s \in S$ and a set $T$ which satisfies the first three conditions of $S$-compatibility, if $s^- \in S \sqcup \{0\}$, then any branch yielding $T$ must not use case (i) when $s$ is the maximal element.  Thus it must be that case (iii) is applied some number of times, moving the maximal element left, and then (ii) is applied to delete it, without modifying $\tilde{T}$.  This contributes a factor of $x(1+x+\cdots+x^{s-s^--2})$ to the $x$-weight of $T$.  In particular, if $s-s^-$ is odd, when we specialize $x=-1$ we get 0, thus, in order for $T$ to contribute a term in the sum, $T$ must indeed be $S$-compatible; if $s-s^{-}$ is even, we get a factor of $-1$.  If instead we have $s^- \in T$, case (iii) must be used to move the maximal element to $s^-+1$ and then case (i) applied.  After setting $x=-1$ this results in a weight of $(-1)^{s-s^--1}$.  Now, assuming $T$ is $S$-compatible, its weight is nonzero.  We have gained a factor of $-1$ for each $s \in S$ with $s^- \not \in T$, for a total of $|S|-|T|$ factors.  We have also gained a factor of $(-1)^{s-s^--1}=(-1)^{t^+-t-1}$ for each $s^-=t \in T$.  In total we have sign
\[
(-1)^{|S|+|T|+\sum_{t \in T}(t^+-t-1)} = (-1)^{|S|+\sum_{t \in T}(t^+-t)}.
\]
This proves (c), completing the proof.
\end{proof}

\subsection{Another proof of Corollary~\ref{cor:symmetry-conjecture}} \label{sec:alternate-proof}
In this section, we present a separate proof of Corollary~\ref{cor:symmetry-conjecture} by writing $P_S(n,q)$ as an alternating sum of products of $q$-multinomial coefficients and $q$-Eulerian polynomials. Define
\[
Q_S(n,q):=\sum_{\substack{\pi\in\mathfrak{S}_n\\\Peak(\pi)\supset S}}q^{\ell(\pi)}=\sum_{S'\supset S}P_{S'}(n,q).
\]
The principle of inclusion and exclusion immediately gives us the following lemma.
\begin{lemma}\label{lem:PIE}
For $n$-admissible set $S$, \[
P_S(n,q)=\sum_{n\text{-admissible }S'\supset S}(-1)^{|S'|-|S|}Q_{S'}(n,q).\]
\end{lemma}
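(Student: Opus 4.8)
The plan is to recognize this as nothing more than Möbius inversion on a Boolean lattice. Since every peak of a permutation in $\mf{S}_n$ lies in $\{2,\ldots,n-1\}$, all sets relevant to $P_{S'}(n,q)$ and $Q_{S'}(n,q)$ are subsets of the finite ground set $[2,n-1]$, so I would work in the Boolean lattice of subsets of $[2,n-1]$ ordered by inclusion. On this lattice the \emph{given} relation $Q_S(n,q)=\sum_{S'\supseteq S}P_{S'}(n,q)$ says precisely that $Q$ is the up-sum of $P$; since the Möbius function of the Boolean lattice is $\mu(S,S')=(-1)^{|S'|-|S|}$, standard Möbius inversion immediately produces
\[
P_S(n,q)=\sum_{S'\supseteq S}(-1)^{|S'|-|S|}Q_{S'}(n,q),
\]
where the sum runs over \emph{all} subsets $S'\supseteq S$ of $[2,n-1]$, admissible or not.

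The one point requiring care is passing from this sum over all supersets to the sum over only $n$-admissible supersets as stated. For this I would record the observation that $n$-admissibility is inherited by subsets: if $S'$ is admissible and $T\subseteq S'$, the conditions $\min(T)>1$ and consecutive gaps exceeding $1$ survive, so $T$ is admissible. Contrapositively, no admissible set can contain a non-admissible set. Because $\Peak(\pi)$ is always admissible, a non-admissible $S'$ satisfies $\Peak(\pi)\not\supseteq S'$ for every $\pi\in\mf{S}_n$, whence $Q_{S'}(n,q)=0$ (and likewise $P_{S'}(n,q)=0$). Thus every non-admissible term drops out of the inverted sum, leaving exactly $\sum_{n\text{-admissible }S'\supseteq S}(-1)^{|S'|-|S|}Q_{S'}(n,q)$, which is the claim.

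I do not expect any genuine obstacle: the substance is the classical inclusion-exclusion principle, and the only real remark is the vanishing of $Q_{S'}$ on non-admissible sets, which lets one move freely between the full Boolean lattice and the subposet of admissible sets. I would explicitly prefer embedding into the full Boolean lattice over inverting directly on the admissible subposet, since the latter route would force one to check that the subposet's Möbius function still equals $(-1)^{|S'|-|S|}$, whereas the vanishing argument sidesteps this entirely and keeps the proof as immediate as the surrounding text suggests.
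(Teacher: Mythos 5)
Your proposal is correct and takes essentially the same route as the paper, which simply notes that the lemma follows immediately from the principle of inclusion--exclusion applied to the defining relation $Q_S(n,q)=\sum_{S'\supset S}P_{S'}(n,q)$. Your write-up merely fills in the details the paper leaves implicit, namely the M\"obius inversion on the Boolean lattice and the observation that $Q_{S'}(n,q)=0$ for non-admissible $S'$ (since subsets of admissible sets are admissible), which justifies restricting the inverted sum to admissible supersets.
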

We now compute $Q_S(n,q)$ explicitly. For $S=\{s_1<\cdots<s_r\}$, we divide $S$ into blocks $S=S_1\sqcup S_2\sqcup\cdots \sqcup S_k$ such that $\max S_i<\min S_{i+1}$, and $s_j$ and $s_{j+1}$ belong to the same block if and only if $s_{j+1}-s_j=2$. For $S_i=\{s_j<\cdots<s_{j+a-1}\}$, let $\bar S_i:=\{s_j-1,\ldots,s_{j+a-1}+1\}$ of cardinality $2m+1$. By definition, $\bar S_i$ does not intersect $\bar S_j$ for distinct $i\neq j$. So we can partition $\{1,2,\ldots,n\}$ into consecutive intervals in the order of $U_0\sqcup \bar S_0\sqcup U_1\sqcup \bar S_1\sqcup\cdots\sqcup \bar S_k\sqcup U_k$.

As an example, take $n=12$, $S=\{3,5,8\}$. Then $S_1=\{3,5\}$ and $S_2=\{8\}$. Moreover, $\bar S_1=\{2,3,4,5,6\}$ and $\bar S_2=\{7,8,9\}$. Further, $U_0=\{1\}$, $U_1=\emptyset$ and $U_2=\{10,11,12\}$.

\begin{prop}\label{prop:expression-Q}
For an $n$-admissible set $S$ and $\{1,2,\ldots,n\}=U_0\sqcup \bar S_0\sqcup U_1\sqcup \bar S_1\sqcup\cdots\sqcup \bar S_k\sqcup U_k$ as above. Let $|U_i|=u_i$ and $|\bar S_i|=r_i$. Then
\[
Q_S(n,q)=\qbinom{n}{u_0,r_1,u_1,\ldots,r_k,u_k}\cdot\prod_{j=1}^k E_{r_j}(q)\cdot \prod_{i=0}^k[u_i]!_q
\]
where \[E_m(q)=\sum_{w\in\mathfrak{S}_m,w(1)<w(2)>w(3)<\cdots}q^{\ell(w)}\] is the length generating function for alternating permutations of size $m$.
\end{prop}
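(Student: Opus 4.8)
The plan is to show that the condition $\Peak(\pi)\supset S$ is purely \emph{local}: it imposes an up-down alternating pattern on the restriction of $\pi$ to each window $\bar S_j$, while placing no constraint whatsoever on the intervals $U_i$ or across any block boundary. Granting this, $Q_S(n,q)$ becomes a length generating function over permutations whose one-line restriction to each consecutive block in $U_0,\bar S_1,U_1,\ldots,\bar S_k,U_k$ is required to lie in a prescribed set, and such generating functions factor through the standard ``shuffle'' decomposition of inversions, exactly in the style of the proofs of Theorems~\ref{thm:q-des-formula} and~\ref{thm:log-concavity-in-b-basis}.

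First I would record the structural equivalence. A peak at position $s$ means $\pi_{s-1}<\pi_s>\pi_{s+1}$. For a block $S_j=\{s,s+2,\ldots,s+2(a-1)\}$ of peaks spaced two apart, these constraints chain into the single condition that the restriction of $\pi$ to the $r_j=2a+1$ consecutive positions $\bar S_j=\{s-1,s,\ldots,s+2a-1\}$ has relative-order pattern satisfying $w(1)<w(2)>w(3)<\cdots>w(r_j)$, i.e.\ lies in the set counted by $E_{r_j}(q)$; this is an \emph{equivalence}, since the peak inequalities are exactly the defining inequalities of an up-down alternating pattern on $\bar S_j$. The two facts to check are: (1) the windows $\bar S_j$ are pairwise disjoint, which follows from admissibility, as $s_{i+1}-s_i\geq 2$ forces distinct blocks to satisfy $s'-s_{\text{last}}\geq 3$ and hence window $j$ ends at $s_{\text{last}}+1$ strictly before window $j+1$ begins at $s'-1$; and (2) no peak constraint crosses a window boundary, since the extreme inequalities of a block, $\pi_{s-1}<\pi_s$ and $\pi_{s+2a-2}>\pi_{s+2a-1}$, already lie inside $\bar S_j$. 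Because we require only $\Peak(\pi)\supset S$ rather than equality, the positions in the $U_i$ and the adjacencies between consecutive blocks are entirely free; note that $U_i$ may be empty (as with $U_1$ in the worked example), which is harmless.

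Next I would carry out the length factorization. Encoding $\pi$ by the ordered set partition of values among the blocks $U_0,\bar S_1,\ldots,U_k$ together with the relative-order pattern inside each block, every inversion of $\pi$ is either internal to a single block or occurs between two blocks, the latter depending only on which values land in which block. Summing $q^{\ell(\pi)}$ therefore factors: the between-block part sums to the $q$-multinomial coefficient $\qbinom{n}{u_0,r_1,u_1,\ldots,r_k,u_k}$, by the interpretation of $q$-multinomials as inversion generating functions for minimal coset representatives recalled at the start of Section~\ref{sec:q-descent}; the unconstrained $U_i$-blocks contribute $\prod_{i=0}^k [u_i]!_q$; and the $\bar S_j$-blocks contribute $\prod_{j=1}^k E_{r_j}(q)$ by the definition of $E_{r_j}$. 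Multiplying these gives the claimed formula.

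The main obstacle is the clean separation asserted in the structural step: one must be certain that imposing peaks on $S$ creates no implicit comparison across the boundary between a window and its neighbor, especially when an intervening $U_i$ is empty, and that the alternating condition on a window is equivalent to, not merely implied by, the family of peak inequalities. Once this independence of the blocks is established, the factorization of the inversion statistic into between-block and within-block parts is routine and parallels the arguments already used in Section~\ref{sec:q-descent}.
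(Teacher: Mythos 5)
Your proposal is correct and takes essentially the same route as the paper's proof: the paper likewise observes that $\Peak(\pi)\supset S$ is equivalent to the restriction of $\pi$ to each window $\bar S_j$ being alternating (with the $U_i$ unconstrained), and then factors the inversion statistic into the between-block $q$-multinomial coefficient and the within-block contributions $E_{r_j}(q)$ and $[u_i]!_q$. Your additional verifications---disjointness of the windows, equivalence rather than mere implication of the peak inequalities, and the harmlessness of empty $U_i$---simply make explicit details the paper leaves implicit.
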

\begin{proof}
Requiring $w\in\mathfrak{S}_n$ to satisfy $\Peak(w)\supset S$ is the same as requiring that $w$ is an alternating permutation when restricted to $\bar S_i$ for each $i=1,\ldots,k$. Thus, the formula follows because the $q$-multinomial coefficient corresponds to assigning values of $\{1,2,\ldots,n\}$ to blocks $U_0,\bar S_1,U_1,\ldots,\bar S_k,U_k$ while keeping track of the number of inversions between distinct blocks, $E_{r_j}(q)$ is the length generating function for the block $\bar S_j$ and $[u_i]!_q$ is the length generating function for the block $U_i$ where no conditions are imposed.
\end{proof}

Now Corollary~\ref{cor:symmetry-conjecture} follows.
\begin{proof}[Proof of Corollary~\ref{cor:symmetry-conjecture}]
We first show that $Q_S(n,q)$ is palindromic in degree $n\choose2$ by Proposition~\ref{prop:expression-Q}. The $q$-multinomial coefficient on the right hand side of Proposition~\ref{prop:expression-Q} is known to be palindromic in degree ${n\choose2}-{u_0\choose2}-{r_1\choose2}-\cdots-{u_k\choose2}$, by a straightforward calculation. Each $r_j$ is odd by construction so for an alternating permutation $w\in\mathfrak{S}_{r_j}$, its reverse $w'$ is also alternating and $\ell(w)+\ell(w')={r_j\choose2}$. This means $E_{r_j}(q)$ is palindromic in degree ${r_j\choose2}$. In addition, $[u_i]!_q$ is palindromic in degree ${u_i\choose 2}$. Multiplying terms together, we see that $Q_S(n,q)$ is palindromic in degree $n\choose2$. As an alternating sum of palindromic polynomials in degree $n\choose2$ (Lemma~\ref{lem:PIE}), we conclude that $P_S(n,q)$ is palindromic in degree $n\choose2$.
\end{proof}

\section*{Acknowledgements}

We are grateful to Alexander Diaz-Lopez for piquing our interest in descent and peak polynomials, to Alex Postnikov for his helpful comments.  We also wish to thank Bruce Sagan for his careful reading of an earlier version of the paper and the Northeast Combinatorics Network for organizing the Discrete Math Days conference at which this project was initiated.

\bibliographystyle{plain}
\bibliography{q-peaks}
\end{document}